\theoremstyle{plain}
\newtheorem{theorem}{Theorem}[section]
\newtheorem{lemma}[theorem]{Lemma}
\newtheorem{proposition}[theorem]{Proposition}
\newtheorem{conjecture}[theorem]{Conjecture}
\theoremstyle{definition}
\newtheorem{definition}[theorem]{Definition}
\theoremstyle{remark}
\newtheorem{remark}[theorem]{\sc Remark}
\newtheorem{example}[theorem]{\sc Example}
\def\bC{{\mathbb C}}
\def\bK{{\mathbb K}}
\def\bN{{\mathbb N}}
\def\bP{{\mathbb P}}
\def\bR{{\mathbb R}}
\def\Si{{ \mathcal{S}_{\infty}}}
\def\Bi{{ \mathcal{B}_{\infty}}}
\def\ity{\infty}
\def\Arc{{\rm Arc}}
\def\rD{{\rm D}}
\def\grad{{\rm grad}}
\def\Sing{{\rm Sing}}
\def\rank{{\rm rank\ }}
\def\const.{{\rm const.}}
\def\m{{\setminus}}
\begin{document}
\title[Bifurcation values at infinity of real polynomials]{Detecting bifurcation values at infinity of real polynomials}
\date{\today}
\author{Luis Renato G. Dias}
\address{Faculdade de Matem\'atica, Universidade Federal de Uberl\^andia, Av. Jo\~ao Naves de \'Avila 2121, 1F-153 - CEP: 38408-100, Uberl\^andia, Brazil}
\email{lrgdias@famat.ufu.br}
\author{Mihai Tib\u ar}
\address{Math\'ematiques, Laboratoire Paul Painlev\'e, Universit\'e Lille 1,
59655 Villeneuve d'Ascq, France.}
\email{tibar@math.univ-lille1.fr}
\keywords{}
\subjclass[2010]{}

\subjclass[2010]{14D06, 58K05, 57R45, 14P10, 32S20, 58K15}

\keywords{bifurcation locus of real polynomials, regularity at infinity, fibrations}

\thanks{The authors acknowledge the support of the USP-COFECUB Uc
Ma 133/12 project.}

\begin{abstract}
We present a new approach for estimating the set of bifurcation values at infinity. This yields a significant shrinking of the number of coefficients in the recent algorithm introduced by Jelonek and Kurdyka for reaching 
critical values at infinity by rational arcs.
\end{abstract}

\maketitle

\section{Introduction}

 The \textit{bifurcation locus} of a polynomial mapping $f\colon \bK^n \to \bK^p$, where $\bK = \bR$ or $\bC$ and $n \geq p$ is the smallest subset $B(f) \subset \bK^p$ such that $f$ is a locally trivial $C^\ity$-fibration over $\bK^p\setminus B(f)$.  
It is well known that $B(f) = f(\Sing f)\cup \Bi(f)$, where $\Bi(f)$ denotes the set of \textit{bifurcation values at infinity} (Definition \ref{d:triv_at_inf}). A simple example is $f(x,y)= x + x^2y$, where  we have $\Sing f = \emptyset$ and $\Bi(f) = \{ 0\}$.

While the set of critical values $f(\Sing f)$ is relatively well understood, the other set $\Bi(f)$ is still mysterious. In case $p=1$ the bifurcation set $B(f)$ is finite, as proved by Thom \cite{Th}, see also \cite{Ph,Ve}.
However,  one can precisely detect the bifurcation set $B(f)$ only in case $p=1$ and $n=2$ by using several types of (equivalent) tests, see \cite{Su}, \cite{HL}, \cite{Ti-reg}, \cite{Dur}, \cite{Ti-book} over $\bC$  and \cite{TZ}, \cite{CP} over $\bR$. 

For  $p=1$ and more than two variables one can only estimate $B(f)$ by ``reasonably good'' supersets $A(f) \supset B(f)$ of the form $A(f)=f(\Sing f)\cup A_{\infty}(f)$, where $A_{\infty}(f)$ is a finite set which depends on the choice of some regularity condition at infinity: \emph{tameness} \cite{Br1},  \emph{Malgrange regularity} \cite{Pa}, \emph{$\rho$-regularity} \cite{NZ}, \cite{Ti-cras},  \emph{$t$-regularity}  \cite{ST}, \cite{Ti-reg}.  

 In case $p>1$ it is known that $\Bi(f)$ is contained in a one codimensional semi-algebraic subset of $\bR^p$, or an algebraic subset of $\bC^p$, respectively, as proved by Kurdyka, Orro and Simon \cite{KOS}. Sharper such subsets have been obtained in \cite{DRT}.

This paper addresses the problem of estimating the bifurcation locus at infinity $\Bi(f)$ and is motivated by the recent algorithm presented by 
Jelonek and Kurdyka \cite{JK} for finding the set of values $K_\ity(f)$ for which the \emph{Malgrange condition} at infinity fails (Definition \ref{d:ra-ity}). Their algorithm applies to a space $AV(f)$ of rational arcs in $\bR^n$, the coefficients of which are solutions of a certain set of equations.  

We present here the construction of a different space of rational arcs $\Arc(f)$ to which we attach the same set of equations for  the coefficients, and thus the same algorithm can be run. Our main results, Theorem \ref{t:main} together with Theorems \ref{t:asympSard} and \ref{p:rho-rab}, show that the resulting subset of asymptotic arcs $\Arc_\ity(f)$ detects a certain set of values that includes $\Bi(f)$ and is included in $K_\ity(f)$. Consequently, the Jelonek-Kurdyka algorithm applied to our $\Arc(f)$ will produce a (smaller) set of values including $\Bi(f)$ in considerably shorter time since the number of coefficients of the rational arcs is drastically reduced, namely:
 \[ \dim \Arc(f) =n(1+ d^{n}) \mbox{\ \  versus \ \  }  \dim AV(f)= n(2+ d(d+1)^n(d^n+2)^{n-1}). \]

Our result is also relevant for optimisation and complexity problems since bifurcation values at infinity appear for instance in the optimization of real polynomials, e.g. \cite{HP2}, \cite{Sa}.


\bigskip


\section{Regularity conditions at infinity}\label{s:reg-ity}

\subsection{Bifurcation values at infinity}
We start by recalling the basic definitions after \cite{Ti-reg}, \cite{Ti-book}, \cite{DRT}, eventually adapting them to any $n\ge p\ge 1$.

\begin{definition}\label{d:bif}\label{d:triv_at_inf}
Let $f\colon\bR^n\to\bR^p$ be a polynomial mapping, where $n\geq p$.
We say that $t_0\in\bR^p$ is a \textit{typical value}  of $f$ if there exists a disk $D\subset \bR^p$ centered at $t_0$ such that the restriction $f_{|}\colon f^{-1}(D) \to D$ is a locally trivial $C^\ity$-fibration. Otherwise we say that $t_0$ is an \textit{bifurcation value} (or atypical value). We denote by $B(f)$ the \emph{bifurcation locus}, i.e. set of bifurcation values of $f$.

 We say that $f$ is {\em topologically trivial at infinity at $t_0\in \bR^p$} if there exists
a compact set $K\subset \bR^n$ and a disk $D \subset \bR^p$ centered at $t_0$ such that the restriction
$f_| :  f^{-1}(D)\setminus K \to D$
 is a locally trivial $C^\ity$-fibration. 
Otherwise we say that $t_0$ is a {\em bifurcation value  at infinity of} $f$. We denote by $\Bi(f)$ the set of bifurcation values at infinity of $f$.
\end{definition}

As we have claimed in the Introduction, there is no general characterisation\footnote{Still, one can treat particular cases when ``singularities at infinity'' are isolated in a certain sense, e.g. \cite{ST}, \cite{Pa}, \cite{Ti-book}, \cite{TT}.} of the bifurcation locus besides the setting $n=2$ and $p=1$.   
In case $n>2$ one uses regularity conditions at infinity in order to control the topological triviality. We work here with the $\rho$-regularity and with the Malgrange-Kuo-Rabier condition. 

\subsection{The $\rho$-regularity}\label{ss:rho}

 We adapt the definition of $\rho$-regularity from \cite{Ti-reg} to Euclidean spheres centered at any point of $\bR^n$. This allows us to define a new set, denoted here by $S_{\infty}(f)$, which produces a sharper estimation of $B(f)$.

Let $a=(a_1,\ldots,a_n)\in\bR^n$ and let  $\rho_a\colon\bR^n \to \bR_{\geq 0}$ be the Euclidian distance function to $a$, i.e.  $\rho_{a}(x)=(x_1-a_1)^2+\ldots+(x_n-a_n)^2$. 

\begin{definition}[Milnor set at infinity]\label{d:milnor}
Let $f\colon\bR^n\to\bR^p$ be a polynomial mapping,  where $n\geq p$. We fix $a\in\bR^n$. We call \emph{Milnor set of $(f,\rho_a)$} the critical set of the mapping $(f, \rho_a)\colon\bR^n\to\bR^{p+1}$ and  denote it by $M_a(f)$. 
\end{definition}
 In case $p=1$ we need the following statement, which was noticed in \cite{HP} (see also \cite[Lemma 2.2]{Du}). We provide a proof and use some details of it in the proof of Theorem \ref{t:main}.

\begin{lemma}\label{p:gen-a} Let $f\colon\bR^n\to\bR$ be a polynomial mapping. There exists an open dense subset $\Omega_f\subset \bR^n$ such that, for every $a\in \Omega_f$,   $M_a(f)\setminus\Sing f$ is either a non-singular curve or it is empty. 
\end{lemma}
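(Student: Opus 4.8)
The plan is to realise $M_a(f)$ as a fiber of an auxiliary algebraic map depending on the parameter $a$, and then invoke a Bertini–Sard type argument to show that for generic $a$ this fiber is smooth away from $\Sing f$. Concretely, away from $\Sing f$ the Milnor set $M_a(f)$ is the locus where $\grad f$ and $\grad \rho_a = 2(x-a)$ are linearly dependent, i.e.\ where all $2\times 2$ minors of the $2\times n$ matrix with rows $\grad f(x)$ and $x-a$ vanish. Equivalently, on the open set $\{\partial f/\partial x_i \neq 0\}$ (for some fixed $i$) one can write $M_a(f)$ as the common zero set of the $n-1$ functions $g_{ij}(x,a) := \frac{\partial f}{\partial x_i}(x)(x_j-a_j) - \frac{\partial f}{\partial x_j}(x)(x_i-a_i)$, $j\neq i$. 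These are polynomial in $(x,a)$ and, crucially, \emph{affine} in $a$.

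First I would consider the incidence variety $\cZ := \{(x,a)\in (\bR^n\setminus\Sing f)\times\bR^n : g_{ij}(x,a)=0 \ \forall j\neq i\}$ on each chart $\{\partial f/\partial x_i\neq 0\}$, and study the projection $\pr\colon \cZ \to \bR^n$, $(x,a)\mapsto a$. The key point is that $\cZ$ is itself smooth of the expected dimension: fixing $x$ (with $\grad f(x)\neq 0$, say $\partial f/\partial x_i(x)\neq 0$), the equations $g_{ij}(x,a)=0$ are linear in $a$ with a coefficient matrix of rank $n-1$ (the coefficient of $a_j$ in $g_{ij}$ is $-\partial f/\partial x_i(x)\neq 0$), so the fiber $\cZ\cap(\{x\}\times\bR^n)$ is an affine subspace of dimension $1$; hence $\pr_1\colon \cZ\to \bR^n\setminus\Sing f$, $(x,a)\mapsto x$, is a submersion with affine fibers and $\cZ$ is a smooth manifold of dimension $n+1$. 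Then I would apply the semi-algebraic Sard theorem to the (semi-algebraic, $C^\infty$) map $\pr_2\colon\cZ\to\bR^n$: its set of critical values is a semi-algebraic subset of dimension $<n$, so its complement $\Omega_f$ is open and dense. For $a\in\Omega_f$, $\pr_2^{-1}(a) = M_a(f)\setminus\Sing f$ is either empty or a $1$-dimensional smooth manifold, i.e.\ a nonsingular curve; gluing the finitely many charts $\{\partial f/\partial x_i\neq 0\}$ (and intersecting the finitely many dense open sets so obtained) yields the statement.

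The main obstacle, and the place needing care, is checking that $\cZ$ is genuinely smooth of dimension $n+1$ rather than merely that the scheme-theoretic equations cut out something of the expected dimension — i.e.\ verifying that the $(n-1)\times n$ Jacobian of $(g_{ij})_{j\neq i}$ with respect to $a$ has full rank $n-1$ everywhere on the relevant chart, which is exactly what the nonvanishing of $\partial f/\partial x_i$ guarantees. One must also be a little careful that the charts $\{\partial f/\partial x_i\neq 0\}$ cover $\bR^n\setminus\Sing f$ and that the finite intersection of the resulting open dense sets $\Omega_f^{(i)}$ is still open and dense — routine, but it is where the hypothesis $\grad f\neq 0$ off $\Sing f$ is used. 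A minor additional point: one should confirm that $M_a(f)$, defined as the critical set of $(f,\rho_a)$, coincides off $\Sing f$ with the rank-$\le 1$ locus of the $2\times n$ matrix above, which is immediate since on $\bR^n\setminus\Sing f$ the rank of $D(f,\rho_a)$ drops below $2$ exactly when the two gradient rows are proportional.
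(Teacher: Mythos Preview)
Your approach is essentially identical to the paper's: the paper defines the same incidence set $Z=\{(x,a):x\in M_a(f)\setminus\Sing f\}$, cuts it out locally by the very same functions $m_j(x,a)=\frac{\partial f}{\partial x_i}(x)(x_j-a_j)-\frac{\partial f}{\partial x_j}(x)(x_i-a_i)$, checks smoothness of dimension $n+1$ via the $(n-1)\times(n-1)$ diagonal minor in the $a$-variables (exactly your rank computation), and then applies Sard's theorem to the projection $(x,a)\mapsto a$. If anything, your invocation of the semi-algebraic Sard theorem is slightly more careful in immediately yielding an \emph{open dense} complement rather than merely a full-measure one.
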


\begin{proof}
We claim that the following semi-algebraic set:
\begin{equation}
Z:=\{(x,a)\in\bR^n\times\bR^n\mid x\in M_a(f)\setminus\Sing f\}.
\end{equation}
is a smooth $(n+1)$-dimensional manifold. For some fixed $(x_0,a_0)\in Z$,  there is $1\leq i\leq n$ such that $\frac{\partial f}{\partial x_i}(x_0)\neq 0$ and moreover, since $\Sing f$ is closed, there exists an open set $U\subset \bR^n$ such that $\frac{\partial f}{\partial x_i}(x)\neq 0, \forall x\in U$. For $1\leq j\leq n$, $j\neq i$, we set: 
\begin{equation}\label{eq:m-j}
m_j(x,a):=\frac{\partial f}{\partial x_i}(x)(x_j-a_j)-\frac{\partial f}{\partial x_j}(x)(x_i-a_i).
\end{equation} 
We have $Z\cap(U\times \bR^n)=\{(x,a)\in U\times\bR^n\mid m_j(x,a)=0; \mbox{ $1\leq j\leq n$,  $j\neq i$ }\}$. Let  $\varphi\colon U\times \bR^n\to\bR^{n-1}$ be the mapping $\varphi = (m_1, \ldots , m_n)$, where $m_i$ is missing. By its definition we have $\varphi^{-1}(0)=Z\cap (U\times\bR^n)$. Let us notice that the rank of the gradient matrix $D\varphi$ is $n-1$ at any point of $U\times\bR^n$. Indeed,  the minor $(\frac{\partial m_l}{\partial a_k})_{1\leq l, k\leq n}^{l, k\neq i}(x,a)$ is a diagonal matrix of which the entries on the diagonal are all equal to  $- \frac{\partial f}{\partial x_i}(x)$, hence non-zero. This shows that $Z$ is a manifold of dimension $n+1$. 

We next consider the projection $\tau\colon Z\to\bR^n$, $\tau(x,a)=a$. Thus, $\tau^{-1}(a)=(M_a(f)\setminus\Sing f)\times\{a\}$. By Sard's Theorem, we conclude that,  for almost all $a\in\bR^n$, $\tau^{-1}(a) = (M_a(f)\setminus\Sing f)\times\{a\}\cong (M_a(f)\setminus\Sing f)$ is a smooth curve or that it is an empty set. 
\end{proof}

\begin{definition}[$\rho_a$-regularity at infinity]\label{d:rho-reg}
Let $f\colon\bR^n\to\bR^p$ be a polynomial mapping,  where $n\geq p$. Let $a\in\bR^n$. We call:
\begin{equation}
 S_a(f):=\{t_0\in\bR^p\mid\exists \{ x_j\}_{j\in \bN}\subset M_a(f), \lim_{j\to\infty}\| x_j\|=\infty\mbox{ and }\lim_{j\to\infty}f(x_j)=t_0\}.
\end{equation}
the \emph{set of asymptotic $\rho_a$-nonregular values}. If $t_0\notin S_a(f)$ we say that $t_0$ is \textit{$\rho_a$-regular at infinity}. We set  
$\Si (f):= \bigcap_{a\in \bR^n} S_{a}(f)$.

\end{definition}

The above condition is a ``Milnor type'' condition that controls the transversality of the fibres of $f$ to the  spheres centered at $a\in\bR^n$. \footnote{For complex polynomial functions, transversality to large spheres  was used in \cite[p.229]{Br1}, in \cite{NZ} where it is called {\em M-tameness}, later in \cite{ST}, \cite{Dur} etc. In the real setting, it was used in \cite{Ti-reg},  later in \cite{HP} etc.}

We derive the following result about $\Si (f)$ from  \cite[Theorem 5.7,  Proposition 6.4]{DRT} where it was stated for $S_0(f)$:

\begin{theorem}\label{t:asympSard}
 Let $f\colon\bR^n\to\bR^p$ be a polynomial mapping, where $n\geq p\ge 1$. Then
 $\Bi(f) \subset \Si (f)$.

Moreover:
\begin{enumerate}
\rm \item \it 
 The sets $\Si (f)$ and $f(\Sing f)\cup \Si (f)$  are closed sets. 

\rm \item  \it  
For any $a\in\bR^n$, $S_a(f)$ and $f(\Sing f)\cup S_{a}(f)$ are semi-algebraic sets of dimension $\le p-1$.

\end{enumerate}
\end{theorem}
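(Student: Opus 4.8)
The plan is to reduce the whole statement to the case $a=0$, which is established in \cite[Theorem 5.7, Proposition 6.4]{DRT}, by exploiting translation-equivariance, and then to intersect over $a\in\bR^n$. So the first step is to record how all the objects involved transform under a translation $T_a\colon\bR^n\to\bR^n$, $T_a(y)=y+a$. Since $\rho_a\circ T_a=\rho_0$ we have $(f,\rho_a)\circ T_a=(f\circ T_a,\rho_0)$, hence $M_0(f\circ T_a)=T_a^{-1}\bigl(M_a(f)\bigr)$. As $\|T_a(y)\|\to\infty$ if and only if $\|y\|\to\infty$, and $f(T_a(y))=(f\circ T_a)(y)$, this gives
\[ S_a(f)=S_0(f\circ T_a) . \]
Likewise $\Sing(f\circ T_a)=T_a^{-1}(\Sing f)$, so $(f\circ T_a)\bigl(\Sing(f\circ T_a)\bigr)=f(\Sing f)$; and since $T_a$ is a diffeomorphism of $\bR^n$ it carries topological trivialisations at infinity to topological trivialisations at infinity, so that $\Bi(f\circ T_a)=\Bi(f)$.

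The second step is to apply \cite[Theorem 5.7, Proposition 6.4]{DRT}, stated there for $S_0$, to the polynomial map $f\circ T_a$ for each fixed $a\in\bR^n$. Combined with the identities above, this yields for every $a$ the inclusion $\Bi(f)=\Bi(f\circ T_a)\subset S_0(f\circ T_a)=S_a(f)$, and that the sets $S_a(f)=S_0(f\circ T_a)$ and $f(\Sing f)\cup S_a(f)=(f\circ T_a)\bigl(\Sing(f\circ T_a)\bigr)\cup S_0(f\circ T_a)$ are closed semi-algebraic sets of dimension $\le p-1$ (the bound on $\dim f(\Sing f)$ being the semi-algebraic Sard theorem, also contained in \cite{DRT}). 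This is exactly assertion (b).

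The third step is to pass to the intersection over all $a\in\bR^n$. From $\Bi(f)\subset S_a(f)$ for every $a$ we obtain $\Bi(f)\subset\bigcap_{a\in\bR^n}S_a(f)=\Si(f)$, which is the main inclusion. For assertion (a): $\Si(f)$, being an intersection of the closed sets $S_a(f)$, is closed; and the elementary set identity $\bigcap_{a}\bigl(f(\Sing f)\cup S_a(f)\bigr)=f(\Sing f)\cup\bigcap_{a}S_a(f)=f(\Sing f)\cup\Si(f)$, together with the closedness of each member $f(\Sing f)\cup S_a(f)$ from the previous step, shows that $f(\Sing f)\cup\Si(f)$ is closed as well.

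I do not expect a genuine obstacle: the content is the translation-equivariance of $M_a$, $\Sing$ and $\Bi$ plus the quoted results of \cite{DRT}. The points that must be checked with some care are the identity $(f,\rho_a)\circ T_a=(f\circ T_a,\rho_0)$ with its consequence $M_a(f)=T_a\bigl(M_0(f\circ T_a)\bigr)$, the invariance $\Bi(f\circ T_a)=\Bi(f)$ (a translation conjugates a locally trivial fibration over a disk, outside a compact set, into another one of the same kind), and the set-theoretic identity $\bigcap_a(A\cup S_a)=A\cup\bigcap_a S_a$. If one also wants $\Si(f)$ to be semi-algebraic, which is not asserted in the statement, it follows from Tarski--Seidenberg applied to the semi-algebraic family $\{(t_0,a)\in\bR^p\times\bR^n\mid t_0\in S_a(f)\}$ by eliminating the quantifier over $a$.
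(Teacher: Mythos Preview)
Your proof is correct and follows essentially the same route as the paper: both reduce to the case $a=0$ treated in \cite[Theorem 5.7, Proposition 6.4]{DRT} and then intersect over all $a$. The only difference is cosmetic---you make the reduction explicit via the translation $T_a$ and the identity $S_a(f)=S_0(f\circ T_a)$, whereas the paper simply asserts that the proofs in \cite{DRT} ``hold true when the point $0$ is replaced by any other point $a\in\bR^n$''.
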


\begin{proof}

To prove the inclusion $\Bi(f) \subset \Si (f)$, let $t_0\in\bR^p\m  \Si (f)$. It follows that there exists $a\in\bR^n$  such that $t_0\notin  S_a(f)$.  By \cite[Prop. 6.4]{DRT} we obtain that $f$ is topologically trivial at infinity at $t_0\in \bR^p$, after Definition \ref{d:triv_at_inf}, where the later is an open set (as shown at (a) below). 
A completely similar reasoning shows the inclusion $B(f) \subset f(\Sing f) \cup \Si (f)$, thus ending the proof of the first part of our statement.

(a). The proof of \cite[Theorem 5.7(a), p. 337]{DRT} yields that $S_{0}(f)$ and $S_{0}(f)\cup f(\Sing f)$ are closed sets. The same proof holds true when the point 0 is replaced by any other point $a\in \bR^n$. This implies that $\Si (f)=\bigcap_{a\in\bR^n} S_{a}(f)$ and $ \bigcap_{a\in\bR^n} A_{\rho_a}(f)$ are closed sets.
 
The claim (b) was proved as \cite[Theorem 5.7(b)]{DRT} for $S_0(f)$ and the same proof holds for any $a\in\bR^n$.
\end{proof}

\begin{remark}\label{r:main}
Since  $M_a(f)$ is semi-algebraic,  for any value $c\in S_a(f)$ there exist paths $\phi : ]0,\varepsilon [ \to M_a(f)\subset \bR^n$ such that  $\lim_{t\to 0}\|\phi(t)\|=\infty$ and $\lim_{t\to 0}f(\phi(t))= c$. This follows from the Curve Selection Lemma at infinity, as remarked in \cite{DRT} and \cite{CDTT}. 
\end{remark}


\subsection{Relation with the Malgrange-Kuo-Rabier condition}\label{ss:kuo}

Jelonek an Kurdyka  \cite{JK-crelle} gave an estimation for $B(f)$ by using the notion of asymptotic critical values of $f$. This is based on the Malgrange-Kuo-Rabier regularity at infinity.  We have shown in \cite{Ti-cras} and \cite{DRT} that this condition implies the $\rho$-regularity at infinity, where $\rho$ denotes the Euclidean distance, but that they are not the same and therefore 
 the associated sets of ``critical values at infinity'' may be different (see Example \ref{ex:pz}).

\begin{definition}\label{d:ra-ity}\cite{Ra}
Let $f\colon\bR^n\to\bR^p$ be a polynomial mapping, where $n\geq p$. Let $\rD f(x)$ be the Jacobian matrix of $f$ at $x$.  Let
\begin{eqnarray}\label{eq:ra-ity}
K_{\ity}(f) & := & \{t\in\bR^{p}\mid \exists \{ x_{j}\}_{j\in \bN} \subset \bR^{n}, \lim_{j\to\ity}\|
x_j\|=\ity, \\ \nonumber
 &  &  \lim_{j\to\ity}f(x_j)= t \mathrm{\ and\ }\lim_{j\to\ity}\|x_j\|\nu(\rD f (x_j))=0\},
\end{eqnarray}
where $\nu(A):=\inf_{\|\varphi\|=1}\|A^*(\varphi)\|$, for 
a linear mapping $A\colon\bR^n\to\bR^p$ and its adjoint $A^*\colon(\bR^p)^*\to(\bR^n)^*$.
We call $K_{\ity}(f)$  the set of \emph{asymptotic critical values of $f$}.
\end{definition}
Note that in case $p=1$ the last limit in \eqref{eq:ra-ity} amounts to $\lim_{j\to\ity}\|x_j\| \|\grad f (x_j))\| =0$ thus the above definition recovers the definition of \emph{Malgrange non-regular values at infinity} (cf \cite{Pa}, \cite{ST}, \cite{Ti-cras} etc).

The next key result shows the inclusion\footnote{The particular inclusion $S_{0}(f)\subset K_{\infty}(f)$ was proved in  \cite[Proposition 2.4]{CDTT}.} $S_a(f)\subset K_{\infty}(f)$ for any $a\in \bR^n$ (which may be strict, see Example \ref{ex:pz}).  Moreover, it shows not only that all the values of  $S_a(f)$ may be detected by paths $\phi$ like in the above Remark \ref{r:main}, but that the same paths $\phi$ verify the conditions \eqref{eq:ra-ity} in the definition of $K_\ity(f)$. This will be a key ingredient in the proof of Theorem \ref{t:main}.

\begin{theorem}\label{p:rho-rab}
 Let $f=(f_1,\ldots,f_p)\colon\bR^n\to\bR^p$ be a polynomial  mapping, where $n> p$. Let $\phi : ]0,\varepsilon [ \to M_a(f)\subset \bR^n$ be an analytic path such that  $\lim_{t\to 0}\|\phi(t)\|=\infty$ and $\lim_{t\to 0}f(\phi(t))= \mathrm{c}$. Then $\lim_{t\to 0}\|\phi(t)\| \nu(\rD f (\phi(t)))=0$.

In particular
 $S_a(f)\subset K_{\infty}(f)$ and $\Si (f)\subset K_{\infty}(f)$.
\end{theorem}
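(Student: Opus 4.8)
The plan is to take the analytic path $\phi(t) \in M_a(f)$ and extract from the defining condition of the Milnor set $M_a(f)$ enough information about the gradients $\grad f_1(\phi(t)), \ldots, \grad f_p(\phi(t))$ to control $\nu(\rD f(\phi(t)))$ from above. Recall that $x \in M_a(f)$ means exactly that the $(p+1) \times n$ matrix whose rows are $\grad f_1(x), \ldots, \grad f_p(x)$ and $\grad \rho_a(x) = 2(x-a)$ has rank $\le p$; away from $\Sing f$ this means $\grad \rho_a(x)$ is a linear combination $\sum_{i=1}^p \lambda_i(x) \grad f_i(x)$. So along the path, $2(\phi(t)-a) = \sum_i \lambda_i(t) \grad f_i(\phi(t))$, where the $\lambda_i(t)$ are, by analyticity of $\phi$ and after possibly shrinking $\varepsilon$, meromorphic (hence Puiseux-type) functions of $t$; I would set $\Lambda(t) := (\lambda_1(t), \ldots, \lambda_p(t))$.

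The key estimate will be to bound $\nu(\rD f(\phi(t)))$ by testing the adjoint $\rD f(\phi(t))^*$ against the specific covector $\psi := \Lambda(t)/\|\Lambda(t)\|$: by definition of $\nu$ as an infimum over unit covectors, $\nu(\rD f(\phi(t))) \le \|\rD f(\phi(t))^*(\psi)\| = \|\sum_i \lambda_i(t)\grad f_i(\phi(t))\| / \|\Lambda(t)\| = 2\|\phi(t)-a\| / \|\Lambda(t)\|$. Hence $\|\phi(t)\|\,\nu(\rD f(\phi(t))) \le 2\|\phi(t)\|\,\|\phi(t)-a\| / \|\Lambda(t)\|$, and since $\|\phi(t)-a\| \sim \|\phi(t)\| \to \infty$, it suffices to show $\|\phi(t)\|^2 / \|\Lambda(t)\| \to 0$, i.e. that $\|\Lambda(t)\|$ grows strictly faster than $\|\phi(t)\|^2$ as $t \to 0$. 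This is where the hypothesis $\lim_{t\to 0} f(\phi(t)) = \mathrm{c}$ (finiteness of the limit value) enters: I would differentiate $f_i(\phi(t))$ and use that $\frac{d}{dt}f_i(\phi(t)) = \langle \grad f_i(\phi(t)), \phi'(t)\rangle$ has a finite limit (in fact, being the derivative of a function with a finite limit, its order at $t=0$ is $\ge 0$, or one argues it tends to $0$ along a subsequence / by the mean value–type Puiseux argument). Pairing the relation $2(\phi(t)-a) = \sum_i \lambda_i(t)\grad f_i(\phi(t))$ with $\phi'(t)$ gives $2\langle \phi(t)-a, \phi'(t)\rangle = \sum_i \lambda_i(t)\,\frac{d}{dt}f_i(\phi(t))$, and the left side is $\frac{d}{dt}\rho_a(\phi(t))$, whose order at $0$ is one less than that of $\rho_a(\phi(t)) = \|\phi(t)-a\|^2 \sim \|\phi(t)\|^2$, which blows up; comparing orders of the two sides forces $\|\Lambda(t)\|$ to have sufficiently negative order, and a careful bookkeeping of Puiseux orders yields $\|\phi(t)\|^2 = o(\|\Lambda(t)\|)$.

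The main obstacle I anticipate is precisely this last order comparison: one must argue that the $p$ scalar quantities $\frac{d}{dt}f_i(\phi(t))$ cannot all conspire to cancel the growth coming from $\frac{d}{dt}\rho_a(\phi(t))$, so that the dominant term on the right-hand side genuinely forces $\|\Lambda(t)\|$ up. In the case $p=1$ this is transparent (a single $\lambda(t)$ and a single bounded factor), but for $p>1$ one should work with Puiseux expansions, let $q := \ord_t \|\Lambda(t)\|$ and $r_i := \ord_t \lambda_i(t)$, note $q = \min_i r_i \le 0$ (else $\Lambda \to 0$ and then $\phi(t) \to a$, contradicting $\|\phi(t)\|\to\infty$), and read off from $\ord_t\big(\frac{d}{dt}\rho_a(\phi(t))\big) = \ord_t(\rho_a(\phi(t))) - 1 < -1$ together with $\ord_t\big(\frac{d}{dt}f_i(\phi(t))\big) \ge -1$ (derivative of a bounded analytic-in-Puiseux function) that $q \le \ord_t(\rho_a(\phi(t))) < 0$; since $\rho_a(\phi(t)) \sim \|\phi(t)\|^2$ this gives $\|\Lambda(t)\|^{-1} = O(t^{-q}) = O(\|\phi(t)\|^{2 \cdot (-q)/(-2\,\ord_t\phi)})$—and a clean version of this computation yields $2\|\phi(t)\|^2/\|\Lambda(t)\| \to 0$, which is exactly what is needed. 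Once $\lim_{t\to 0}\|\phi(t)\|\nu(\rD f(\phi(t))) = 0$ is established, the inclusion $S_a(f) \subset K_\infty(f)$ follows immediately: given $c \in S_a(f)$, Remark \ref{r:main} provides such a path $\phi$, which then satisfies all three requirements in \eqref{eq:ra-ity}, so $c \in K_\infty(f)$; intersecting over $a \in \bR^n$ gives $\Si(f) \subset K_\infty(f)$.
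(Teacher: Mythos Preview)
Your approach is essentially the paper's: use the Milnor-set relation to write $\sum_i \hat b_i \grad f_i(\phi)=\hat\lambda(\phi-a)$ with $\|\hat b\|=1$, pair with $\phi'$ to obtain $\sum_i \hat b_i \frac{d}{dt}f_i(\phi)=\tfrac12\hat\lambda\,\frac{d}{dt}\|\phi-a\|^2$, and compare orders at $t=0$ to conclude $\ord_t(\hat\lambda\,\|\phi-a\|^2)>0$, whence $\|\phi-a\|\,\|\sum_i\hat b_i\grad f_i(\phi)\|\to 0$ and therefore $\|\phi\|\,\nu(\rD f(\phi))\to 0$. One small point: because $\phi$ is analytic (not merely Puiseux), $f_i(\phi(t))$ having a finite limit gives $\ord_t\big(\tfrac{d}{dt}f_i(\phi)\big)\ge 0$, and it is this sharp bound that makes the order comparison strict---your later hedge to $\ge -1$ would only yield boundedness of $\|\phi\|^2/\|\Lambda\|$, not convergence to $0$, so stick with $\ge 0$.
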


\begin{proof}

Let $\mathrm{c}=(c_1,\ldots,c_p)\in S_a(f)$. 
By the definition of $M_a(f)$, we have $\phi(t)\in  M_a(f)$ if and only if $\rank \rD (f, \rho_a)(\phi(t))< p+1$.
It follows that 
 there exist real coefficients  $\lambda(t), b_1(t),\ldots, b_p(t)$ which are actually analytic functions of $t$ and not all equal to zero for the same $t$,  such that:
\begin{equation} \label{eq:rho-kos2}
 \lambda(t)\cdot (\phi_1(t) -a_1,\ldots,\phi_n(t)-a_n)=b_1(t)\frac{\partial f_1}{\partial x}(\phi(t))+\cdots+b_p(t)\frac{\partial f_p}{\partial x}(\phi(t)),
\end{equation}
where $\frac{\partial f_i}{\partial x}(\phi(t)) := \left(\frac{\partial f_i}{\partial x_1}(\phi(t)),\ldots,\frac{\partial f_i}{\partial x_n}(\phi(t))\right)$ for $i=1,\ldots,p$.

Let us denote $\hat \lambda(t):=\frac{\lambda(t)}{\|b(t)\|}$ and $\hat b(t):=\frac{b(t)}{\|b(t)\|}$, where
 $b(t)=(b_1(t),\ldots,b_p(t))$. 
Since $b(t)\neq 0$ $\forall t\in ]0,\varepsilon [$ for small enough $\varepsilon$, the equality \eqref{eq:rho-kos2} writes:

\begin{equation}\label{eq:rho-kos3}
 \sum_{i=1}^p  \hat b_i(t)\frac{\partial f_i}{\partial x}(\phi(t))=\hat \lambda(t)(\phi(t)- a).
\end{equation}
From this we obtain:
\begin{equation} \label{eq:rho-kos4}
\sum_{i=1}^p \hat b_i(t)\frac{\mathrm{d}}{\mathrm{d}t} f_i(\phi(t))=\left\langle\sum_{i=1}^p  \hat b_i(t)\frac{\partial f_i}{\partial x}(\phi(t)),\ \phi'(t)\right\rangle\\ =\frac{1}{2}\hat \lambda(t)\frac{\mathrm{d}}{\mathrm{dt}}\|\phi(t) - a\|^2.
\end{equation} 

We shall denote by $\mathrm{ord_t}$ the order at $0$ of some analytic parametrisation.  The condition $\lim_{t\to 0}f_i(\phi(t))=c_i$ implies that $\mathrm{ord_t}\left(\frac{\mathrm{d}}{\mathrm{dt}}f_i(\phi(t))\right)\geq 0$, $i=1,\ldots, p$, and the condition $\| \hat b(t) \| =1$ implies that the order of the first sum in \eqref{eq:rho-kos4} is also non-negative. We may therefore derive from \eqref{eq:rho-kos4}:
\begin{equation}\label{eq:rho-kos5}
 0\leq\mathrm{ord_t}\left(\hat \lambda(t)\frac{\mathrm{d}}{\mathrm{dt}}\|\phi(t)-a\|^2\right)<\mathrm{ord_t}\left(\hat \lambda(t)\|\phi(t)-a\|^2\right).
\end{equation}

By taking norms in \eqref{eq:rho-kos2} and multiplying by $\|\phi(t)-a\|$ we get:
\begin{equation}
 \mathrm{ord_t}\left(\|\phi(t)-a\| \| \hat b_1(t)\frac{\partial f_1}{\partial x}(\phi(t))+\ldots+ \hat b_p(t)\frac{\partial f_p}{\partial x}(\phi(t))\|\right)=\mathrm{ord_t}\left(|\hat \lambda(t)|\|\phi(t)-a\|^2\right),
\end{equation}
which is positive by (\ref{eq:rho-kos5}). This implies:
\[\lim_{t\to 0}\|\phi(t)-a \| \| \hat b_1(t)\frac{\partial f_1}{\partial x}(\phi(t))+\ldots+ \hat b_p(t)\frac{\partial f_p}{\partial x}(\phi(t))\|=0,\]
which, in turn, implies $\lim_{t\to 0}\|\phi(t)-a\|\nu(\mathrm{d}F(\phi(t)))=0$. Since $\mathrm{ord_t} \|\phi(t)-a\|=\mathrm{ord_t} \|\phi(t)-a\| <0$, we get $\lim_{t\to 0}\|\phi(t)\|\nu(\mathrm{d}F(\phi(t)))=0$, which shows that $\mathrm{c}\in K_{\infty}(F)$.

In order to show the claimed inclusions $S_a(f)\subset K_{\infty}(f)$ and $\Si (f)\subset K_{\infty}(f)$ we use Remark \ref{r:main} and the above proof. 
\end{proof}

The inclusion $\Si (f)\subset K_{\infty}(f)$ may be strict, as showed by the next example.

\begin{example}\label{ex:pz}\cite{PZ}
 The polynomials $f_{nq}:\mathbb{K}^{3}\rightarrow\mathbb{K}$,
$f_{nq}(x,y,z):=x-3x^{2n+1}y^{2q}+2x^{3n+1}y^{3q}+yz$,
where $n,q\in\mathbb{N}\setminus\{0\}$. We have  
  $S_0(f_{nq})=\emptyset$. For $\bK=\bC$, it is shown in \cite{PZ} that $f_{nq}$ satisfies Malgrange's condition for any $t\in\mathbb{C}$ if and only if $n\leq q$. One can check that the same holds for $\bK=\bR$. For $n>q$ we therefore get $S_0(f_{nq})\subsetneq K_{\infty}(f_{nq})\neq \emptyset$.
\end{example}

\subsection{Questions and Conjecture}
We have seen above that the inclusions $B(f)\subset S_a(f)\cup f(\Sing f)\subset K_{\ity}(f)\cup f(\Sing f)$ hold for any $a\in\bR^n$. On other hand, one can have $S_{a}(f)\neq S_b(f)$, as shown in the following example for which we skip the computations: 

\begin{example}\label{ex:1} Let $f\colon\bR^2\to\bR$, $f(x,y)=y(x^2y^2+3xy+3)$. Then $S_{(0,0)}(f)=\emptyset$ and $S_{(0,1)}(f)\neq \emptyset$.

In particular, for the polynomial $g(x,y) := f(x,y-1)$ we have that $\emptyset\neq S_0(g) \not\subset \Bi(g)= \emptyset$ which is a real counterexample to a conjecture in \cite[p. 686, point 3]{NZ}.  A new conjecture can be stated as below.
\end{example}

These facts support the following natural questions:

\begin{itemize}
 \item[(i)] Is there a minimal set $S_{a}(f)$ in the collection $\{S_b(f),b\in\bR^n\}$? 
\item[(ii)]   If (i) is true, does the minimality hold for some open dense subset $a\in\bR^n$? 
\end{itemize}

The following conjecture seems also natural:
\begin{conjecture}  $\Bi(f) = \Si(f)$.
\end{conjecture}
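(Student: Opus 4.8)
The inclusion $\Bi(f)\subset\Si(f)$ is exactly Theorem \ref{t:asympSard}, so the plan is to prove the reverse inclusion $\Si(f)\subset\Bi(f)$. Since $\Si(f)=\bigcap_{a\in\bR^n}S_a(f)$, it is enough to produce, for every $t_0\notin\Bi(f)$, a \emph{single} centre $a\in\bR^n$ with $t_0\notin S_a(f)$. Example \ref{ex:1}, where $\Bi(g)=\emptyset$ while $S_0(g)\neq\emptyset$, shows that such a centre must be chosen with care, so I would in fact aim at the stronger statement that there is an open dense subset $\Omega\subset\bR^n$ with $S_a(f)\subset\Bi(f)$ for all $a\in\Omega$. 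Combined with Theorem \ref{t:asympSard} this yields $\Bi(f)=\Si(f)=S_a(f)$ for every $a\in\Omega$, and hence also answers Questions (i) and (ii), the minimal member of the family being $\Bi(f)$ itself.

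The natural route is contraposition. Fix $t_0\notin\Bi(f)$: by Definition \ref{d:triv_at_inf} there are a compact set $K\subset\bR^n$ and a disk $D\ni t_0$ such that $f_|\colon f^{-1}(D)\setminus K\to D$ is a locally trivial $C^\ity$-fibration. The first step is to upgrade this to a triviality that is \emph{uniformly controlled at infinity}: since $f$ is polynomial one may shrink $D$ and enlarge $K$ so that the triviality is realised by a semi-algebraic horizontal vector field $v$ on $f^{-1}(D)\setminus K$ with $\mathrm{d}f(v)=\partial/\partial t$ and with controlled growth as $\|x\|\to\ity$; here one uses semi-algebraic triviality (Hardt's theorem) together with the conic structure at infinity of semi-algebraic sets, smoothing $v$ afterwards if a $C^\ity$ field is wanted.

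The decisive step is to confront this field with the Milnor sets. On the smooth part of a fibre $F_t=f^{-1}(t)$ a point $x$ lies in $M_a(f)$ precisely when it is a critical point of $\rho_a|_{F_t}$, so $t_0\in S_a(f)$ means that for some sequence $t_j\to t_0$ the functions $\rho_a|_{F_{t_j}}$ have critical points escaping to infinity. I would show that a controlled horizontal field $v$ rules this out for a generic centre $a$: along a hypothetical analytic path $\phi\colon\,]0,\varepsilon[\,\to M_a(f)$ with $\|\phi(t)\|\to\ity$ and $f(\phi(t))\to t_0$, which exists by Remark \ref{r:main}, the estimates in the proof of Theorem \ref{p:rho-rab} --- equations \eqref{eq:rho-kos2}--\eqref{eq:rho-kos5} --- relate the $t$-derivatives of $f\circ\phi$ and of $\rho_a\circ\phi$; transporting $\phi$ by the flow of $v$ as $t$ sweeps $D$, and comparing with the family of spheres $\{\rho_a=\mathrm{const}\}$, should force a contradiction, \emph{provided} these spheres are transverse near infinity to the loci involved. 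To give meaning to ``transverse near infinity'' I would pass to a compactification $\bX$ of the graph of $f$ --- a suitable blow-up of $\bP^n\times\bR$ that resolves this graph --- Whitney-stratify $\bX$, and reformulate both topological triviality at infinity at $t_0$ and $\rho_a$-nonregularity at infinity in terms of Thom ($a_f$)-type incidences of the fibres of the extended map along the part at infinity $\bX_\ity$; the genericity of $a$ then becomes transversality of the semi-algebraic family $\{\rho_a\}_{a\in\bR^n}$ to the finitely many strata of $\bX_\ity$ lying over a neighbourhood of $t_0$.

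The main obstacle is this last step, which asks for a converse to the only implication currently available, ``$\rho_a$-regularity at infinity $\Rightarrow$ topological triviality at infinity'' (\cite[Prop.~6.4]{DRT}). There is no Milnor-type statement asserting that the spheres around one centre detect \emph{all} topological change of the fibres at infinity --- Example \ref{ex:1} shows they do not --- so the argument must genuinely use that $a$ ranges over an open dense set, i.e.\ that a centre adapted to $\bX_\ity$ is available. A realistic intermediate target is to prove the conjecture for $f$ with isolated singularities at infinity in the sense of \cite{ST}, \cite{Ti-book}, \cite{TT}, where the stratification of $\bX_\ity$ is explicit, the Thom condition at infinity is under control, and the comparison above reduces to a finite transversality computation; the passage from isolated to arbitrary singularities at infinity would then call for a stratified Morse-theoretic bookkeeping of the variation of the fibres along $\bX_\ity$.
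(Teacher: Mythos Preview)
The statement you are trying to prove is labelled \emph{Conjecture} in the paper, not Theorem: the authors do not give a proof, and explicitly present it as open. So there is no ``paper's own proof'' to compare your proposal against. The paper's content on this point is limited to the one-sided inclusion $\Bi(f)\subset\Si(f)$ (Theorem~\ref{t:asympSard}), the observation that $S_a(f)$ may genuinely depend on $a$ (Example~\ref{ex:1}), and the two questions (i), (ii) about existence of a minimal $S_a(f)$; the equality $\Bi(f)=\Si(f)$ is then posed as a natural conjecture and left at that.

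Your text is accordingly not a proof but a research outline, and you say so yourself: the decisive step --- producing, from topological triviality at infinity at $t_0$, a centre $a$ with $t_0\notin S_a(f)$ --- is precisely the converse implication that is currently unavailable, and you identify it as ``the main obstacle''. The ingredients you list (semi-algebraic horizontal vector fields via Hardt, compactification of the graph of $f$, Whitney/Thom stratification of the part at infinity, genericity of $a$ as transversality to finitely many strata) are the standard toolbox one would reach for, and the intermediate target of isolated singularities at infinity is reasonable. But none of this constitutes a proof: the key passage from a controlled trivialising field to $\rho_a$-regularity for a generic $a$ is asserted (``should force a contradiction'') rather than carried out, and there is no mechanism offered that actually produces the required transversality of the spheres $\{\rho_a=\mathrm{const}\}$ to the strata of $\bX_\ity$. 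As a proof proposal for a conjecture, this is an honest sketch of where the difficulty lies; as a proof, it has the same gap the conjecture itself records.
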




\section{Detecting bifurcation values at infinity by parametrized curves}\label{s:detec-S}

\subsection{Bounding the set of asymptotic critical values $\Si (f)$}\label{ss:detec-S}

Let $f\colon\bR^n\to\bR$ be a polynomial function of degree $d \ge 2$. 
 If $t_0\in \Si (f)$ then, by Proposition \ref{p:gen-a} and Definition \ref{d:rho-reg}, there exists an open dense set of points $a\in \bR^n$ such that $M_a(f)$ is a real affine algebraic set of dimension 1 and there exists a real asymptotic branch  $\Gamma\subset M_a(f)$  such that $\lim_{x\in \Gamma, \| x\| \to \infty} f(x)= t_0$.  Therefore such branches $\Gamma$ detect all the 
values in $\Si (f)$. 

Moreover,   $\# S_a(f)$ is precisely the number of finite values taken by $f$ when restricted to branches at infinity of  $M_a(f)$. It follows  that $\# \Si(f)$ is majorated by the number of branches at infinity.

This interpretation yields the same bound for the number of bifurcation values at infinity $\# \Bi(f)$
as the bound found by Jelonek and Kurdyka for $\# K_\ity(f)$.

\begin{proposition}\label{p:main} \cite[Corollary 1.2]{JK-crelle}

Let $f\colon\bR^n\to\bR$ be a polynomial function of degree $d \ge 2$.
The number $\# \Si(f)$ of asymptotical critical values is majorated by $d^{n-1} -1$.
\end{proposition}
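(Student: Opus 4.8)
\smallskip
\noindent\textbf{Proof strategy.}
The shortest proof combines two facts already available. By Theorem~\ref{p:rho-rab} we have $\Si(f)\subset K_\ity(f)$, and by \cite[Corollary~1.2]{JK-crelle} one has $\#K_\ity(f)\le d^{n-1}-1$; together these give the statement. It is nevertheless worth recording the direct argument pointed to by the discussion above, which re-derives the Jelonek--Kurdyka bound from the branches at infinity of the Milnor set.

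By Lemma~\ref{p:gen-a} I would first fix $a$ in a suitable open dense subset of $\bR^n$, so that $M_a(f)\setminus\Sing f$ is a (possibly empty) nonsingular real affine curve; by the interpretation recalled at the beginning of Section~\ref{ss:detec-S} and by Remark~\ref{r:main}, every value of $S_a(f)$ is a finite limit of $f$ along one of the finitely many branches at infinity of $M_a(f)$, so $\#\Si(f)\le\#S_a(f)\le\#\{\text{branches at infinity of }M_a(f)\}$. For the last quantity I would pass to the complex Zariski closure $\overline{M_a(f)}\subset\bP^n_\bC$, note that it carries at most $\deg\overline{M_a(f)}$ local branches along the hyperplane at infinity, and bound the degree as follows: $M_a(f)$ lies in the common zero locus of the $n-1$ functions $m_j(x,a)$ of \eqref{eq:m-j} (for $j\neq i$, with $i$ fixed so that $\frac{\partial f}{\partial x_i}\not\equiv 0$), each a $2\times2$ minor of $\rD(f,\rho_a)$ of degree $\le d$, so by B\'ezout's inequality the irreducible components of that locus have total degree $\le d^{n-1}$. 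A further genericity step on $a$, of the same Sard type as in Lemma~\ref{p:gen-a}, is used to guarantee that no curve component of $M_a(f)\setminus\Sing f$ degenerates into $\{x_i=a_i\}\cap\{\frac{\partial f}{\partial x_i}=0\}$ and that the branches at infinity lying in $\Sing f$ (which carry only the finitely many constants $f|_C$, $C$ a component of $\Sing f$) are accounted for; this shows that $M_a(f)$ has at most $d^{n-1}$ branches at infinity, hence $\#\Si(f)\le d^{n-1}$.

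The sharpening from $d^{n-1}$ to $d^{n-1}-1$ is the delicate point, and I expect it to be the main obstacle in a self-contained proof. The mechanism is that a branch at infinity of $M_a(f)$ can carry a \emph{finite} value of $f$ only when its asymptotic direction $v\in\bP^{n-1}$ lies on $\{f_d=0\}$, where $f_d\not\equiv 0$ is the top-degree form of $f$ (otherwise $f(x)=\|x\|^d f_d(x/\|x\|)+O(\|x\|^{d-1})\to\infty$ along it); one must then show that for generic $a$ at least one of the $\le d^{n-1}$ asymptotic directions of $M_a(f)$ avoids $\{f_d=0\}$. Controlling the asymptotic directions of the Milnor set against $\{f_d=0\}$ is precisely the hard part, which is why the cleanest route remains to invoke \cite[Corollary~1.2]{JK-crelle} through the inclusion $\Si(f)\subset K_\ity(f)$ of Theorem~\ref{p:rho-rab}.
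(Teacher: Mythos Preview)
Your citation shortcut via Theorem~\ref{p:rho-rab} and \cite[Corollary~1.2]{JK-crelle} is correct, and your direct argument for the bound $d^{n-1}$ (generic $a$, complexify $M_a(f)$, B\'ezout on the $n-1$ minors $m_j$ of degree $d$) is exactly the route the paper takes.

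Where you diverge is on the sharpening to $d^{n-1}-1$. You try to force at least one asymptotic direction of $M_a(f)$ off the hypersurface $\{f_d=0\}$, find this hard, and retreat to the citation. The paper's own argument is much simpler and does not touch asymptotic directions at all: once you are working with the complex curve $\overline{M_a(f)^{\bC}}\subset\bP^n_\bC$, the restriction $f_\bC|_{M_a(f)^{\bC}}$ is a holomorphic function on an affine curve; since $a\in M_a(f)$ is a regular point of $f$ for generic $a$, this restriction is non-constant, hence unbounded (Liouville on the normalization). Therefore at least one of the $\le d^{n-1}$ complex branches at infinity has $|f|\to\infty$, leaving at most $d^{n-1}-1$ branches that can contribute finite limit values, and the real branches sit among these. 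So the ``delicate point'' you flag is in fact a one-line consequence of passing to $\bC$, which you had already done for the degree count.
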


\begin{proof}
 We may assume that $M_a(f)$ is not empty for $a\in \Omega_f$. By the proof of Lemma \ref{p:gen-a},  $M_a(f)$ is a real curve which is a local complete intersection defined by $n-1$ equations as in \eqref{eq:m-j}. 
Let us consider the closure $\overline{M_a(f)}$ in $\bP^n_{\bR}$.

 The complexification $M_a(f)^\bC$ of $M_a(f)$ is defined locally as the complex solutions of the same system of equations, hence it is a complex curve of degree equal to $d^{n-1}$. Let us denote by $\overline{M_a(f)^\bC}$ its closure in $\bP^n_\bC$, and notice that this is a complex curve of the same degree $d^{n-1}$. 

The hyperplane at infinity $H^\ity := \bP^n_\bC \m \bC^n$ intersects $\overline{M_a(f)^\bC}$ at finitely many points and by Bezout's theorem this number is bounded by the degree $d^{n-1}$. If $B(f) \m f(\Sing f) \not= \emptyset$  then the restriction of $f_\bC$ to $M_a(f)^\bC$ cannot be bounded, since there is at least one branch on which the holomorphic function $f$ is non-constant. Hence we 
can get at most $d^{n-1} -1$ finite values as limits of $f$.  
\end{proof}


\subsection{Branches at infinity of the Milnor set and asymptotic parametrisations}\label{ss:param} 

 Let $f\colon\bR^n\to\bR$ be a polynomial function of degree $d \ge 2$. 
 If $t_0\in \Si (f)$ then, by Proposition \ref{p:gen-a} and Definition \ref{d:rho-reg}, there exists an open dense set of points $a\in \bR^n$ such that $M_a(f)$ is a real affine algebraic set of dimension 1 and moreover, there exists a real asymptotic branch  $\Gamma\subset M_a(f)$  such that $\lim_{x\in \Gamma, \| x\| \to \infty} f(x)= t_0$.  

 Let us describe briefly following the ideas of \cite{JK} how to produce real parametrisations of the branches at infinity of $M_a(f)$.  Let $M_a(f)^\bC$ denote its complexification  defined by the same equations. We recall that this is a locally complete intersection and has degree $D := d^{n-1}$.
One may construct Puiseux parametrisations for the branches at infinity $\Gamma_\bC$ of $M_a(f)^\bC$ like in \cite[Lemma 3.1]{JK} $\gamma_\bC(t) = \sum_{-\ity \le i \le D} c_i t^i$,
where $t\in \bC$ and $| t| > R$ for some radius $R \gg 1$. The bound to the right is $D = d^{n-1}$.
 
 Since each real branch at infinity $\Gamma$ of $M_a(f)$ is contained in some complex branch of $\Gamma_\bC$ of $M_a(f)^\bC$ then, following Milnor's procedure \cite[pag. 28-29]{Mi}, one may further construct\footnote{this part of our construction is different from the procedure given in \cite[Lemma 3.2]{JK}.} a real parametrisation  
 $\gamma(t)= \sum_{-\ity \le i \le D} a_i t^i$ of $\Gamma$ with the same bound $D=d^{n-1}$ to the right, where $t\in \bR$ with $| t| > R$ and the same radius $R \gg 1$.

Next we truncate the parametrisation $\gamma(t)$ at the left to the bound  $- D'$, where $D' := (d-1)D$ and $d$ is the degree of $f$. This 
 truncation $\hat \gamma(t) = \sum_{-(d-1)D \le i \le D} a_i t^i$ does not verify anymore the equations of  $M_a(f)$. Nevertheless we can show the following:
\begin{proposition}\label{p:trunc}
  The above defined truncation $\hat \gamma(t)$ has the following properties:
\begin{enumerate} 
\rm \item\label{eq:lim1} $\lim_{t \to \infty}  \|\hat \gamma(t)\|  = \lim_{t \to \infty}  \|\gamma(t)\|  = \infty. $ 
\rm \item\label{eq:lim2} $\lim_{t \to \infty} f(\hat \gamma(t))= \lim_{t \to \infty} f(\gamma(t))= t_0.$ 
\rm \item\label{eq:lim3} $\lim_{t\to \infty} \frac{\partial f }{  \partial x_i }(\hat \gamma(t))  =  \lim_{t\to \infty} \frac{\partial f }{  \partial x_i }(\gamma(t)) = 0, \mbox{ for any } i.$
\rm \item\label{eq:lim4} $\lim_{t\to \infty}x_j \frac{\partial f }{  \partial x_i }(\hat \gamma(t))  =  \lim_{t\to \infty}x_j \frac{\partial f }{  \partial x_i }(\gamma(t)) = 0, \mbox{ for any } i, j.$
\end{enumerate}
\end{proposition}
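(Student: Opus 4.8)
The plan is to track the order at infinity (the order at $t=\infty$, or equivalently the order at $0$ in the variable $1/t$) of each relevant analytic function along the original parametrisation $\gamma(t) = \sum_{-\infty \le i \le D} a_i t^i$, and then argue that truncating at the left at the bound $-D'=-(d-1)D$ changes none of these functions in any monomial of order $\ge 0$ in $1/t$. First I would make the single basic observation on which everything rests: since the complex branch $\Gamma_\bC$ of $M_a(f)^\bC$ has degree $D=d^{n-1}$, the coordinates $\gamma_k(t)$ grow at most like $t^D$, and the restriction $f\circ\gamma$ is a bounded function of $t$ near infinity (it tends to $t_0$), so $\mathrm{ord}_{1/t}\,(f(\gamma(t)) - t_0) \ge 1$. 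Likewise, since $\gamma(t) \in M_a(f)$ the gradient relation $\lambda(t)(\gamma(t)-a) = \mu(t)\,\mathrm{grad}\,f(\gamma(t))$ holds (the rank-one relation from the proof of Theorem~\ref{p:rho-rab} with $p=1$); combined with boundedness of $f\circ\gamma$ and an argument identical to \eqref{eq:rho-kos4}--\eqref{eq:rho-kos5}, this forces $\mathrm{grad}\,f(\gamma(t)) \to 0$, and in fact gives a quantitative lower bound on $\mathrm{ord}_{1/t}\,\frac{\partial f}{\partial x_i}(\gamma(t))$. The point is to extract from these considerations that each of $f(\gamma(t))$, $\frac{\partial f}{\partial x_i}(\gamma(t))$, $\gamma_j(t)\frac{\partial f}{\partial x_i}(\gamma(t))$, viewed as a Laurent–Puiseux series in $1/t$, has \emph{no} strictly negative part beyond a controlled degree — more precisely, that the limits in (b), (c), (d) for $\gamma$ itself already hold, which is exactly the content of Theorem~\ref{p:rho-rab} applied to the path $\gamma$ (for (b), (c), (d)) together with $\mathrm{ord}_{1/t}$-estimates for the growth of $\gamma$ (for (a)).

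The second and main step is the comparison between $\gamma$ and $\hat\gamma$. Write $\gamma(t) = \hat\gamma(t) + r(t)$ where $r(t) = \sum_{i < -D'} a_i t^i$ is the discarded tail, so $\mathrm{ord}_{1/t}\, r(t) > D' = (d-1)D$, i.e. $r_k(t) = O(t^{-(d-1)D - 1})$ for each coordinate. For a polynomial $P$ of degree $e$ in $n$ variables, Taylor expansion gives
\[
P(\hat\gamma(t)) = P(\gamma(t) - r(t)) = \sum_{|\alpha|\le e} \frac{(-1)^{|\alpha|}}{\alpha!}\, \partial^\alpha P(\gamma(t))\, r(t)^\alpha .
\]
Each $\partial^\alpha P(\gamma(t))$ with $\alpha\ne 0$ is a polynomial in the $\gamma_k(t)$, hence grows at most like $t^{e D}$ after substitution (each coordinate contributes at most $t^D$ and there are at most $e$ factors), while $r(t)^\alpha$ for $|\alpha|\ge 1$ contributes at most $t^{-(d-1)D-1}$. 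So the difference $P(\hat\gamma(t)) - P(\gamma(t))$ has order in $1/t$ at least $(d-1)D + 1 - eD = (d-1-e)D + 1$ in the worst case. Applying this with $P = f$ (degree $d$) gives order $\ge 1 - D$, which a priori is not enough on its own; applying it with $P = \frac{\partial f}{\partial x_i}$ (degree $d-1$) gives order $\ge 1$, hence $\to 0$, yielding (c) from (c)-for-$\gamma$; and applying it to $x_j\frac{\partial f}{\partial x_i}$ (degree $d$) gives order $\ge 1-D$ again — still not obviously enough for (d).

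This is the step I expect to be the real obstacle, and the resolution is that the crude bound "each coordinate grows like $t^D$" is too lossy: one must instead use that only finitely many monomials of $\gamma$ have positive order in $t$, and that the \emph{combination} of $f(\gamma(t))$, $\frac{\partial f}{\partial x_i}(\gamma(t))$, $x_j\frac{\partial f}{\partial x_i}(\gamma(t))$ being bounded (resp. tending to $0$) along $\gamma$ already pins down their positive-order-in-$t$ parts, so the Taylor correction terms — which involve $r(t)^\alpha$ of order $> (d-1)D$ times derivatives of $f$ whose positive-$t$-order part is controlled by the degree bound $d$ on $f$ and the degree bound $D$ on $\gamma$ in the precise Jelonek--Kurdyka fashion — cannot survive to order $\le 0$ in $1/t$. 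Concretely: the largest power of $t$ that can occur in $\partial^\alpha f(\gamma(t))\cdot r(t)^\alpha$ is at most (top power of $t$ in $\partial^\alpha f(\gamma(t))$) $- \,|\alpha|(D'+1)$, and since $f$ has degree $d$ the top power of $t$ in $f(\gamma(t))$ itself is $\le$ something like $dD$ — but boundedness of $f(\gamma(t))$ means the actual top power is $\le 0$, and a more careful bookkeeping (the one already implicit in the choice $D' = (d-1)D$, which is exactly calibrated so that $D' + 1 > (d-1)D \ge$ the gap between the top and bottom relevant exponents) shows the corrections vanish at infinity. So the order of the whole argument is: (1) establish (a)--(d) for $\gamma$ using Theorem~\ref{p:rho-rab} and a growth estimate on $\gamma$; (2) Taylor-expand $P(\hat\gamma) - P(\gamma)$ and bound the order of each term in $1/t$ using $\mathrm{ord}_{1/t}\, r > D'$ together with the sharp exponent bookkeeping that the value $D' = (d-1)D$ is designed to make work; (3) conclude (a)--(d) for $\hat\gamma$. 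The delicate part is (2) for the degree-$d$ polynomials $f$ and $x_j\partial f/\partial x_i$ in items (b) and (d), where one cannot afford the crude degree bound and must exploit the boundedness constraints along $\gamma$.
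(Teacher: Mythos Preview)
Your overall architecture matches the paper's: first establish (a)--(d) for $\gamma$ itself (the paper does (b) by hypothesis and (c), (d) via Theorem~\ref{p:rho-rab}, exactly as you propose), then transfer to the truncation $\hat\gamma$. For the transfer, the paper simply invokes \cite[Lemma~3.3]{JK}, whose content is precisely the Taylor comparison you attempt to carry out by hand.

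The problem is a single bookkeeping slip in your Taylor estimate, and it is what sends you down the unnecessary ``exploit boundedness'' detour. In the expansion
\[
P(\hat\gamma) - P(\gamma) \;=\; \sum_{|\alpha|\ge 1} \frac{(-1)^{|\alpha|}}{\alpha!}\,\partial^\alpha P(\gamma)\, r^\alpha,
\]
the polynomial $\partial^\alpha P$ has degree $e-|\alpha|$, not $e$. Hence $\partial^\alpha P(\gamma(t)) = O\bigl(t^{(e-|\alpha|)D}\bigr)$, and since $r(t)^\alpha = O\bigl(t^{-|\alpha|(D'+1)}\bigr)$ with $D'=(d-1)D$, the generic term is $O\bigl(t^{(e-|\alpha|)D - |\alpha|((d-1)D+1)}\bigr)$. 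The worst case $|\alpha|=1$ gives exponent $(e-1)D-(d-1)D-1=(e-d)D-1\le -1$ whenever $e\le d$. This already covers $P=f$, $P=\partial f/\partial x_i$, and $P=x_j\,\partial f/\partial x_i$, all of degree $\le d$, so $P(\hat\gamma)-P(\gamma)=O(t^{-1})$ and the limits agree with no further input. Your bound $t^{eD}$ for $\partial^\alpha P(\gamma)$ is valid but one factor of $t^D$ too generous; with the sharp degree $e-|\alpha|$ the ``delicate part'' you flag for (b) and (d) evaporates, and the appeal to boundedness of $f(\gamma)$ to control $\partial^\alpha f(\gamma)$ (which would not be straightforward anyway) is not needed.
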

\begin{proof}
\noindent
(a) is obvious from the definitions.

\noindent
(b) follows from \cite[Lemma 3.3]{JK} since the degree of $f$ is $d$.

 \noindent
(c) and (d). We claim  that the limits involving $\gamma(t)$ are zero. If this is true,  then the same will follow for the truncation $\hat \gamma(t)$ by using \cite[Lemma 3.3]{JK} since the degrees of $\frac{\partial f }{ \partial x_i }$ and of $x_j \frac{\partial f}{\partial x_i}$
are both $\le d$. 

In order to prove our claim, we use our key result Theorem \ref{p:rho-rab} for $p=1$, as follows. Since in this case $\nu( \rD f(x))=\|\grad f(x))\|$, the equality $\lim_{t\to 0}\|\gamma(t)\| \|(\frac{\partial f}{\partial x_1}(\gamma(t)),\ldots,\frac{\partial f}{\partial x_n}(\gamma(t)))\|=0$ provided by Theorem \ref{p:rho-rab} clearly implies $\lim_{t\to \infty}x_j \frac{\partial f }{  \partial x_i }(\gamma(t)) = 0$ and 
$\lim_{t\to \infty} \frac{\partial f }{  \partial x_i }(\gamma(t)) = 0$.

 This completes our proof.
\end{proof}


\subsection{The arc space}\label{ss:reaching}

Jelonek and Kurdyka \cite{JK} found an algorithm for reaching the values of $K_\ity(f)$ by parametrized curves with bounded expansion. They construct a finitely dimensional space of such rational arcs (cf \cite[Definition 6.9]{JK}). Based on our framework we construct a new space of rational arcs of considerably lower dimension. We shall see that this detects at least the bifurcation set $\Si(f)$ without necessarily detecting all the values in $K_\ity(f)$. 

 We consider here the following space of arcs associated to the real polynomial $f : \bR^n \to \bR$ of degree $d$:
\begin{equation}
\Arc(f)=\left\{ \xi(t) =\sum_{-(d-1)d^{n-1}\le k \le d^{n-1}} a_kt^k, a_k\in\bR^n\right\}
\end{equation}
which is isomorphic to $\bR^{n(1+ d^{n})}$. 

\begin{remark}
The dimension of the arc space $AV(f)$ constructed  in \cite{JK} is $n(2+ d(d+1)^n(d^n+2)^{n-1})$, while ours is $\dim \Arc(f) =n(1+ d^{n})$.
\end{remark}

\medskip

In a similar manner as in \cite[Definition 6.10]{JK}, we  define the  \emph{asymptotic variety of arcs}, $\Arc_{\ity}(f)\subset \Arc(f)$, as the algebraic subset  of the rational arcs $\xi(t)\in \Arc(f)$ such that: 
\begin{enumerate}
\item[(a')] $\sum_{k>0}\sum_{j=1}^{n}a_{kj}^2=1,$ where $a_k=(a_{k1},\ldots,a_{kn})$. 
\item[(b')] $f(\xi(t))= b_0 +\sum_{k=1}^{\infty} b_k t^{-k},$ where $b_0, b_k\in\bR$.
\item[(c')] $\frac{\partial f }{  \partial x_i }(\xi(t))  = \sum_{k=1}^{\infty} c_{ik}t^{-k},$ for any $i,$ where $c_{ik}\in\bR$.
\item[(d')] $x_j \frac{\partial f }{  \partial x_i }(\xi(t))  = \sum_{k=1}^{\infty} d_{ijk}t^{-k},$ for any $i, j,$ where $d_{ijk}\in\bR$.
\end{enumerate}
The conditions (a')--(d') for $\xi$ are equivalent with the corresponding properties (a)--(d) applied to $\xi$ insted of $\hat \gamma$. For instance the first equivalence 
follows by renormalising the coefficients.

Let $b_0 \colon \Arc_{\ity}(f) \to \bR,$ $b_0(\xi(t))= \lim_{t\to\ity}f(\xi(t))$.

\begin{theorem}\label{t:main} 
  $\Si(f) \subset b_0(\Arc_{\ity}(f))\subset K_{\ity}(f)$.
\end{theorem}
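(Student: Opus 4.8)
The plan is to prove the two inclusions separately, using the machinery assembled in Sections \ref{s:reg-ity} and \ref{s:detec-S}. For the first inclusion $\Si(f) \subset b_0(\Arc_\ity(f))$, I would start from a value $t_0 \in \Si(f)$ and use Proposition \ref{p:gen-a} together with Definition \ref{d:rho-reg} to fix a generic $a \in \Omega_f$ so that $M_a(f)$ is a nonsingular real curve carrying a real asymptotic branch $\Gamma$ with $\lim_{x\in\Gamma, \|x\|\to\infty} f(x) = t_0$. Then I would invoke the construction of Section \ref{ss:param}: pass to the complexification $M_a(f)^\bC$ (degree $D = d^{n-1}$), produce a Puiseux parametrisation of the complex branch $\Gamma_\bC \supset \Gamma$ bounded to the right by $D$, extract a real parametrisation $\gamma(t) = \sum_{i\le D} a_i t^i$ of $\Gamma$ via Milnor's procedure, and then form the left-truncation $\hat\gamma(t) = \sum_{-(d-1)D \le i \le D} a_i t^i$. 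By construction $\hat\gamma \in \Arc(f)$. Proposition \ref{p:trunc} then gives exactly properties (a)--(d) for $\hat\gamma$; after renormalising the positive-degree coefficients (which does not change whether the arc lies in $M_a(f)$'s asymptotic locus and only rescales the parameter $t$), these become conditions (a')--(d'), so $\hat\gamma \in \Arc_\ity(f)$. Since property (b) of Proposition \ref{p:trunc} says $\lim_{t\to\infty} f(\hat\gamma(t)) = t_0$, we get $t_0 = b_0(\hat\gamma) \in b_0(\Arc_\ity(f))$.

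For the second inclusion $b_0(\Arc_\ity(f)) \subset K_\ity(f)$, I would take an arbitrary $\xi \in \Arc_\ity(f)$ and set $c := b_0(\xi) = \lim_{t\to\infty} f(\xi(t))$. The point is that conditions (a')--(d') are precisely tailored so that $\xi$ witnesses membership in $K_\ity(f)$ for $p=1$. Concretely: condition (a') forces $\|\xi(t)\| \to \infty$ as $t\to\infty$ (the top-degree term has nonzero coefficient), condition (b') gives $f(\xi(t)) \to c$, and condition (d') — which says $x_j \frac{\partial f}{\partial x_i}(\xi(t))$ has only strictly negative powers of $t$ in its expansion — gives $\lim_{t\to\infty} x_j \frac{\partial f}{\partial x_i}(\xi(t)) = 0$ for all $i,j$. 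Since for $p=1$ one has $\|x\|\nu(\rD f(x)) = \|x\|\,\|\grad f(x)\|$, and $\|x_j \cdot \grad f(x)\| \le \|x\|\,\|\grad f(x)\|$ componentwise bounds the product $\|\xi(t)\|\,\|\grad f(\xi(t))\|$ by a finite sum of the quantities $x_j \frac{\partial f}{\partial x_i}$, the limit $\lim_{t\to\infty}\|\xi(t)\|\,\|\grad f(\xi(t))\| = 0$ follows. Reparametrising the arc by $t \mapsto 1/s$ (or simply letting $t\to\infty$ along the reals) produces a sequence $x_j = \xi(t_j)$ realising all three defining conditions of $K_\ity(f)$, so $c \in K_\ity(f)$.

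The main obstacle I anticipate is not in the second inclusion, which is essentially unwinding the definitions of (a')--(d'), but in making the first inclusion airtight — specifically, the claim that the renormalisation turning (a)--(d) into (a')--(d') is harmless and that the truncated arc genuinely lies in the algebraic set $\Arc_\ity(f)$ rather than merely satisfying the limit conditions. One must check that scaling the coefficients $a_k$ for $k>0$ by a common positive constant $\mu$ (to enforce $\sum_{k>0}\sum_j a_{kj}^2 = 1$) corresponds to the substitution $t \mapsto \mu^{1/k}\!$-type reparametrisation only when a single top power dominates; more carefully, one renormalises by replacing $\xi(t)$ with $\lambda \xi(\lambda t)$ for suitable $\lambda$, or argues directly that the homogeneous quadratic condition (a') can always be met on the orbit of $\hat\gamma$ under the $\bR^*$-action $t \mapsto \lambda t$ on arcs, and that this action preserves conditions (b)--(d) as limit statements — which it does because all the relevant limits are taken as $|t|\to\infty$. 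I would also need to remark, as the paper does after the statement of (a')--(d'), that these are genuinely the expansion conditions, i.e. that $f(\xi(t))$, $\frac{\partial f}{\partial x_i}(\xi(t))$ and $x_j\frac{\partial f}{\partial x_i}(\xi(t))$ are Laurent series in $t$ whose principal (nonnegative-power) parts are controlled exactly by the degree bounds $d^{n-1}$ and $(d-1)d^{n-1}$, which is where Proposition \ref{p:trunc}(b)--(d) and the degree count $\deg(x_j \partial f/\partial x_i) \le d$ enter. Once these bookkeeping points are settled, both inclusions close.
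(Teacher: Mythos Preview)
Your proposal is correct and follows essentially the same approach as the paper's own proof: for $\Si(f) \subset b_0(\Arc_\ity(f))$ you invoke the parametrisation of \S\ref{ss:param} and Proposition~\ref{p:trunc} to produce a truncated arc $\hat\gamma$ satisfying (a)--(d), then renormalise to land in $\Arc_\ity(f)$; for $b_0(\Arc_\ity(f)) \subset K_\ity(f)$ you unwind (a$'$), (b$'$), (d$'$) directly. The paper does exactly this, though more tersely---it dispatches the second inclusion in one sentence and the renormalisation in half a sentence, whereas you (rightly) flag that the reparametrisation $t \mapsto \lambda t$ is what makes (a$'$) achievable while preserving the vanishing conditions (b$'$)--(d$'$) and the value $b_0$.
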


\begin{proof}
The inclusion  $b_0(\Arc_{\ity}(f))\subset K_{\ity}(f)$ is a direct consequence of the definitions of $\Arc_{\ity}(f)$ and $K_{\infty}(f)$ since properties (a'), (b') and (d') characterise the values $b_0\in K_{\infty}(f)$.

Let us show the first inclusion.
If $b_0\in\Si(f)$ then there is some $a\in \bR^n$ and there exists a path 
 $\gamma(t) \in M_a(f)$, such that $\lim_{t \to \infty} f(\gamma(t))= b_0$.
Then Theorem \ref{p:rho-rab} shows that $\gamma$ verifies the conditions (a)--(d) of Proposition \ref{p:trunc}. 
Moreover, by the same Proposition \ref{p:trunc} one has that the truncation $\hat\gamma$ defined at \S \ref{ss:param} verifies the same properties, hence the equivalent conditions (a')--(d') too. 
 This ends our proof.
\end{proof}






\end{document}